\newtheorem{theorem}{Theorem}[section]
\newtheorem{introthm}{Theorem}
\newtheorem{lemma}[theorem]{Lemma}
\newtheorem{proposition}[theorem]{Proposition}
\newtheorem{corollary}[theorem]{Corollary}
\theoremstyle{definition}
\newtheorem{definition}[theorem]{Definition}
\newtheorem{example}[theorem]{Example}
\newtheorem{remark}[theorem]{Remark}
\numberwithin{equation}{theorem}
\def\vector2#1#2{\left(\begin{array}{c} #1 \\ #2 \end{array}\right)}
\def\SL{{\rm SL}}
\def\CC{{\mathbb C}}
\def\ZZ{{\mathbb Z}}
\def\GR{{\rm Gr}}
\def\multiset#1#2{\left(\!\!\!\left(
\begin{matrix}
#1 \\
#2
\end{matrix}
\right)\!\!\!\right)}
\def\stir#1#2{\left\{
\begin{matrix}
#1 \\
#2
\end{matrix}
\right\}}
\def\smallstir#1#2{\left\{
\begin{smallmatrix}
#1 \\
#2
\end{smallmatrix}
\right\}}
\def\SL{{\rm SL}}
\title[On the Hilbert series of the Grassmannian]{On the Hilbert series of the Grassmannian}
\author[Lukas Braun]{Lukas Braun}
\address{Mathematisches Institut, Universit\"at T\"ubingen,
Auf der Morgenstelle 10, 72076 T\"ubingen, Germany}
\email{braun@math.uni-tuebingen.de}
\subjclass[2010]{14M15, 13D40, 33C90}
\keywords{Hilbert series of the Grassmannian, Narayana numbers, Euler's hypergeometric  transform}
\begin{document}

\begin{abstract}
We compute the Hilbert series of the complex Grassmannian using invariant theoretic methods and show that its $h$-polynomial coincides with the $k$-Narayana polynomial. We give a simplified formula for the $h$-polynomial of Schubert varieties. Finally, we use a generalized hypergeometric Euler transform to find simplified formulae for the $k$-Narayana numbers, i.e. the $h$-polynomial of the Grassmannian.

\end{abstract}

\maketitle

\section*{Introduction}

Consider the Grassmannian $X = \GR(k,n)$ of k-dimensional
vector subspaces of a given n-dimensional complex
vector space and its homogeneous coordinate ring
$R = \oplus_j R_j$ defined by the Pl\"ucker embedding.
Recall that the associated Hilbert series is
$$
H(X)
\ = \
\sum_{j \ge 0} \dim (R_j) t^j.
$$
The Hilbert function $j \mapsto \dim (R_j)$ is, up to
finitely many values, a polynomial in~$j$, the Hilbert
polynomial of $\GR(k,n)$.
Moreover, the Hilbert series is represented
as a rational function in $j$ with a denominator polynomial
of degree $k(k(n-k)+1)$. The numerator is then called the $h$-polynomial
of $\GR(k,n)$.
Various approaches leading to explicit formulae for the Hilbert polynomial of $\GR(k,n)$ can be found for example in~~\cite{Chip, GN, Ho1, Ho2, HP, Li, Mu, SSW, Stu}.

Mukai used an invariant theoretic approach in~\cite{Mu} to compute the Hilbert polynomial of the Grassmannian in the special case $k=2$.
The aim of the present paper is to generalize this invariant theoretic method to arbitrary $k$.
In fact,  Mukai's Hilbert polynomial for $\GR(2,n)$ is the special case $k=2$, $r=n-1$ of the order polynomial
$$
\mathfrak{N}_k(r,j):= \prod_{i=0}^{k-1}\binom{r+i+j-1}{i+j}\binom{r+i-1}{i}^{-1},
$$ 
which is a polynomial in $j$, showing up in Sulanke's papers~\cite{Su1,Su2}. In these papers, he generalized the (two-dimensional) \emph{Narayana numbers} to the \emph{$k$-Narayana numbers} 
$$
N_k(r,j):=
\sum\limits_{l=0}^{j}(-1)^{j-l} \binom{kr+1}{j-l} \mathfrak{N}_k(r-1,l)
$$
for arbitrary dimension $k$. We call the order polynomial $\mathfrak{N}_k(r,j)$ the \emph{multiset $k$-Narayana numbers} in the following.  For combinatorial interpretations of these numbers, we refer to Section~\ref{sec:nar}. The generating series
$$
N_{k,r}:=\sum_{j=0}^{(k-1)(r-1)} N_k(r,j) t^j,
\qquad
\mathfrak{N}_{k,r}:= \sum_{j\geq 0} \mathfrak{N}_k(r,j) t^j,
$$
of which the first is a polynomial in $t$, are called 
the \emph{$k$-Narayana polynomial} and the \emph{$k$-Narayana series} respectively.
Computing the Hilbert series of the Grassmannian using the invariant theoretic method, we arrive at our first main theorem:

\begin{introthm}[Thm.~\ref{th:hilbgrass}]
The $h$-polynomial of the Grassmannian $\GR(k,n)$ is the $k$-Narayana polynomial $N_{k,n-k+1}$ and its Hilbert series is the $k$-Narayana series $\mathfrak{N}_{k,n-k+1}$.
\end{introthm}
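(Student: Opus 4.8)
The plan is to compute the Hilbert series $H(X)$ of $X = \GR(k,n)$ directly via invariant theory, realizing the homogeneous coordinate ring $R$ under the Plücker embedding as a ring of $\SL_k$-invariants, and then identifying the resulting rational function with the $k$-Narayana series. Concretely, by the first fundamental theorem of invariant theory the Plücker coordinate ring of $\GR(k,n)$ is isomorphic to the ring of $\SL_k$-invariants of $n$ copies of the standard representation $\KK^k$, graded so that $\dim R_j$ counts a space of covariants. The aim is to extract this dimension as a multiplicity, for which the natural tool is the Weyl character formula (or equivalently Molien--Weyl integration) applied to the torus of $\SL_k$.

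First I would set up the generating function $\sum_j \dim(R_j) t^j$ as a residue or a contour integral over the maximal torus, with the Weyl denominator accounting for the $\SL_k$-action and a factor $(1-t\,x_1\cdots x_k \cdot \text{monomial})^{-1}$ encoding the $n$-fold symmetric algebra. Second, I would evaluate this integral by iterated residues, reducing it step by step in the torus variables. At each stage one expects to pick up binomial-type contributions that, after reindexing, assemble into the product $\prod_{i=0}^{k-1}\binom{r+i+j-1}{i+j}\binom{r+i-1}{i}^{-1}$ defining the multiset $k$-Narayana numbers $\mathfrak{N}_k(r,j)$ with $r = n-k+1$. This would establish that $\dim(R_j) = \mathfrak{N}_k(n-k+1,j)$, hence that the Hilbert series equals $\mathfrak{N}_{k,n-k+1}$.

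Third, having identified the Hilbert series with the $k$-Narayana series, I would pass to the $h$-polynomial. Since the $h$-polynomial is by definition the numerator when $H(X)$ is written over the denominator of degree $k(k(n-k)+1)$, and since the Hilbert polynomial $j \mapsto \mathfrak{N}_k(n-k+1,j)$ has the stated degree, the inclusion-exclusion relation
$$
N_k(r,j) = \sum_{l=0}^{j}(-1)^{j-l}\binom{kr+1}{j-l}\mathfrak{N}_k(r-1,l)
$$
is exactly the passage from the Hilbert polynomial to the $h$-vector, i.e.\ multiplying the series $\mathfrak{N}_{k,r}$ by the denominator $(1-t)^{k(n-k)+1}$-type factor. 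I would make this precise by checking that the series $\mathfrak{N}_{k,r}$, as a rational function, has denominator of the claimed degree and numerator $N_{k,r}$, so that the two assertions of the theorem are equivalent and both follow from the residue computation.

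The main obstacle I anticipate is the iterated residue evaluation: controlling the combinatorics of the successive residues so that the intermediate expressions genuinely collapse to the closed product $\mathfrak{N}_k(r,j)$, rather than to an unwieldy alternating sum. The bookkeeping of the Weyl denominator against the symmetric-algebra factor across $k$ torus variables is delicate, and verifying that the poles contributing to the residue are precisely those yielding the Narayana product—and that spurious contributions cancel—is where the real work lies. A secondary, more routine, difficulty is confirming the degree $k(k(n-k)+1)$ of the denominator, which amounts to a dimension count for $\GR(k,n)$ and a verification that no cancellation lowers the degree of the $h$-polynomial below what the order-polynomial degree predicts.
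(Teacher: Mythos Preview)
Your overall framework coincides with the paper's: realize the Pl\"ucker coordinate ring as $\CC[V^n]^{\SL_k}$ and compute the Hilbert series as a constant-term (equivalently, residue) extraction over the maximal torus of $\SL_k$, with the Weyl numerator against the $n$-th power of the symmetric-algebra denominator. The passage from the Hilbert series to the $h$-polynomial via the inclusion--exclusion identity is also handled exactly as you describe (this is Corollary~\ref{cor:narid} in the paper).

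Where you and the paper diverge is precisely at the obstacle you flag. You plan to compute iterated residues variable by variable and hope the intermediate expressions collapse to the closed product $\prod_i \binom{r+i+j-1}{i+j}\binom{r+i-1}{i}^{-1}$. The paper does \emph{not} aim for the product form directly. Instead it rewrites the Weyl numerator $\prod_{i<j}(1-z_i/z_j)$ as the Vandermonde-type determinant $\det(z_i^{j-i})$ (Lemma~\ref{le:van}), expands by Leibniz as a signed sum over $S_k$, and then extracts the constant term in all torus variables simultaneously. The constant-term condition forces the exponents $\sigma(j)+i_j-j$ to coincide across $j$, so each permutation contributes a single product of multiset coefficients, and the signed sum over $S_k$ is immediately recognized as the determinant $\det\bigl(\smallmultiset{n}{l-i+j}\bigr)_{1\le i,j\le k}=\{n,\ldots,n\}_l$. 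The product formula then comes for free from the separately established evaluation of this determinant (Proposition~\ref{prop:n-id}, due to Berman--K\"ohler). In short, the paper sidesteps your main obstacle by landing on the \emph{determinantal} definition of $\mathfrak{N}_k$ rather than the product one; no delicate cancellation of spurious residues is needed.
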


Finally, we express the $k$-Narayana series as a hypergeometric function ${}_{k}F_{k-1}$. This leads to the observation that the simplified formula $\frac{1}{r}\binom{r}{j}\binom{r}{j+1}$ for the $2$-Narayana numbers is a direct consequence of Euler's hypergeometric transformation
$$
{}_{2}F_{1}\left(
\begin{matrix}
a,b\\
c
\end{matrix};t\right) = (1-t)^{c-a-b} {}_{2}F_{1}\left(
\begin{matrix}
c-a,c-b\\
c
\end{matrix};t\right).
$$
This transformation has been generalized in~\cite{MP2} - see also~\cite{Ma,MP1} - to the generalized hypergeometric function ${}_{k}F_{k-1}$. Applying this generalized Euler transformation to the $k$-Narayana series, we express the $k$-Narayana polynomial as a hypergeometric function, i.e. we find a new (multiplicative) formula for the $k$-Narayana numbers:

\begin{introthm}[Thm.~\ref{th:multform}]
For the $k$-Narayana numbers $N_k(r,j)$, we have the product formula
$$
N_k(r,j)= \binom{(k+1)r+2k+3}{j}\binom{(k+1)r+2k+2}{j} \prod_{i=1}^{(k-2)(r-3)}\frac{\eta_i+j}{\eta_i},
$$
where the $\eta_i$ are the zeros of a certain polynomial of degree $(k-2)(r-3)$.
\end{introthm}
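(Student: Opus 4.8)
The plan is to route everything through generalized hypergeometric functions, as the introduction anticipates, and to extract the product formula from the coefficients of a single transformed ${}_{k}F_{k-1}$. First I would rewrite the multiset $k$-Narayana series as a hypergeometric function. Cancelling factorials in the ratio of consecutive coefficients gives
$$
\frac{\mathfrak{N}_k(r,j+1)}{\mathfrak{N}_k(r,j)}
=
\frac{(j+r)(j+r+1)\cdots(j+r+k-1)}{(j+1)(j+2)\cdots(j+k)},
$$
together with $\mathfrak{N}_k(r,0)=1$; since one factor $j+1$ is the canonical denominator, this identifies
$$
\mathfrak{N}_{k,r}
=
{}_{k}F_{k-1}\left(
\begin{matrix}
r,\, r+1,\, \ldots,\, r+k-1\\
2,\, 3,\, \ldots,\, k
\end{matrix};t\right).
$$

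Next I would feed in the defining alternating sum. Because $\sum_{m\ge 0}(-1)^m\binom{kr+1}{m}t^m=(1-t)^{kr+1}$, the convolution defining $N_k(r,j)$ is exactly the Cauchy product realizing the generating-function identity $N_{k,r}=(1-t)^{kr+1}\mathfrak{N}_{k,r-1}$, hence
$$
N_{k,r}
=
(1-t)^{kr+1}\,
{}_{k}F_{k-1}\left(
\begin{matrix}
r-1,\, r,\, \ldots,\, r+k-2\\
2,\, 3,\, \ldots,\, k
\end{matrix};t\right).
$$
The numerator parameters $r,\ldots,r+k-2$ and the denominator parameters $2,\ldots,k$ here differ pairwise by the positive integer $r-2$, which is precisely the configuration to which the generalized Euler transformation of Miller--Paris \cite{MP2} (see also \cite{Ma,MP1}) applies.

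The central step is to apply that transformation to absorb the power of $(1-t)$ and collapse the right-hand side into one terminating hypergeometric series. For $k=2$ this is the classical Euler transformation displayed in the introduction; in general Miller--Paris produces a ${}_{p}F_{p-1}$ whose numerator parameters split into two principal ones $\alpha,\beta$, the higher-$k$ analogues of $c-a$ and $c-b$, together with a block of $(k-2)(r-3)$ auxiliary parameters $\eta_i+1$ balanced by lower parameters $\eta_i$, where the $\eta_i$ are the roots of the attached Miller--Paris characteristic polynomial of degree $(k-2)(r-3)$. Reading off the coefficient of $t^j$ and using $(1)_j=j!$ then gives
$$
N_k(r,j)
=
\frac{(\alpha)_j\,(\beta)_j}{(1)_j\,j!}
\prod_{i=1}^{(k-2)(r-3)}\frac{(\eta_i+1)_j}{(\eta_i)_j},
$$
after which converting $(\alpha)_j,(\beta)_j$ into binomial coefficients via $\binom{M}{j}=(-1)^j(-M)_j/j!$ and the elementary identity $(\eta_i+1)_j/(\eta_i)_j=(\eta_i+j)/\eta_i$ produces the stated product, with $\alpha=-((k+1)r+2k+3)$ and $\beta=-((k+1)r+2k+2)$.

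The main obstacle I anticipate is the bookkeeping inside the transformation step. One has to verify that the parameters of $\mathfrak{N}_{k,r-1}$ meet the integrality hypotheses of the Miller--Paris theorem, track how the exponent of $(1-t)$ pins down the two principal parameters $\alpha,\beta$ as the indicated negative integers, and control the characteristic polynomial tightly enough to certify both that its degree is exactly $(k-2)(r-3)$ and that its roots are precisely the auxiliary parameters $\eta_i$. Getting these parameter counts and values correct is the delicate part; the coefficient extraction and the final binomial rewriting are then routine.
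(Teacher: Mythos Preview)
Your approach is exactly the paper's: write the $k$-Narayana series as a ${}_{k}F_{k-1}$, apply the Miller--Paris generalized Euler transform to absorb the power of $(1-t)$, and read off the coefficient of $t^j$ using $(\eta_i+1)_j/(\eta_i)_j=(\eta_i+j)/\eta_i$ and $(-M)_j/j!=(-1)^j\binom{M}{j}$. The only corrections are in the bookkeeping you already flagged: the paper works with $N_{k,r}=(1-t)^{k(r-1)+1}\mathfrak{N}_{k,r}$ (not $(1-t)^{kr+1}\mathfrak{N}_{k,r-1}$; the $r-1$ in the introduction's displayed definition is a slip), and in applying Miller--Paris it singles out $a=r+k-2$, $b=r+k-1$, $c=2$, leaving $k-2$ pairs $(f_i,m_i)=(i,\,r-3)$ for $i=3,\ldots,k$ --- so the integer shifts are $r-3$ rather than $r-2$, which is precisely what makes $m=\sum m_i=(k-2)(r-3)$ and the exponent $c-a-b-m=-k(r-1)-1$ match.
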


The paper is divided in three sections. In the first, we study different generalizations of Narayana numbers, relations among them and express their generating functions as hypergeometric functions. The second section is devoted to the computation of Hilbert series of the Grassmannian using invariant theoretic methods. In addition, a simplified formula - compared to that of~\cite{Nan} - for the $h$-polynomial of Schubert varieties is derived.
The third and last section uses the generalized hypergeometric Euler transformation of~\cite{MP2} to find a new multiplicative formula for the $k$-Narayana numbers.

The author wishes to thank J\"urgen Hausen for many helpful comments.

\tableofcontents

\section{The $k$-Narayana numbers}\label{sec:nar}

The \emph{Narayana numbers}
$$
N(r,j):=\frac{1}{r}\binom{r}{j}\binom{r}{j+1} 
=
\frac{1}{j+1}\binom{r-1}{j}\binom{r}{j}
$$
count the number of Dyck paths in the plane from $(0,0)$ to $(r,r)$ with exactly $j$ ascents. They have been generalised in numerous ways. We exhibit some of these generalizations.
The \emph{$k$-Narayana} numbers $N_k(r,j)$ count the number of paths along the lattice $\ZZ^k$ from the origin to $(r,\ldots,r)$, staying in the region $\{0 \leq x_1 \leq \ldots \leq x_k\}$ and having $j$ \emph{ascents}. An ascent here is a pair of successive steps so that the second step is an increase in a coordinate with higher index than the first one.
They have been observed by Sulanke in the papers~\cite{Su1} and \cite{Su2} and are given by the formula
$$
N_k(r,j):=
\sum\limits_{l=0}^{j}(-1)^{j-l} \binom{kr+1}{j-l} \prod\limits_{i=0}^{k-1}\binom{r+i+l}{r}\binom{r+i}{r}^{-1}.
$$

In his initial paper~\cite{Na}, Narayana introduced the Narayana numbers in another context than that of the Dyck paths that coincides for $k=2$. Consider the following setting: paths with $j$ steps from the origin to a point $(a_1,\ldots,a_k)$ in $\ZZ^k$ with $a_1\geq \ldots \geq a_k \geq j$. The steps comply with the following rules:
\begin{enumerate}
\item in each step, each coordinate increases at least by one.
\item if $a_{il}$ is the $i$-th coordinate after the $l$-th step, then $a_{1l} \geq \ldots \geq a_{kl}\geq l$ holds.
\end{enumerate} 

The number $(a_1,\ldots,a_k)_{j}$ of such paths according to Theorem 1 of~\cite{Na} is given by
$$
(a_1,\ldots,a_k)_{j} := 
\begin{vmatrix}
\binom{a_1-1}{j} & \cdots & \binom{a_k-1}{j+k-1} \\
\vdots & \ddots & \vdots \\
\binom{a_1-1}{j-k+1} & \cdots & \binom{a_k-1}{j}
\end{vmatrix}
= \det\left( \binom{a_l-1}{j+l-i}_{1\leq i,l\leq k}\right).
$$

We see that by setting $k=2$, $a_1=a_2=r$ one gets the ordinary Narayana numbers, while for $k=1$, one gets the binomial coefficients. 
We now consider two small modifications, namely the numbers
\begin{small}

\begin{align*}
[a_1,\ldots,a_k]_{j} 
:=&
\begin{vmatrix}
\binom{a_1+j-1}{j} & \cdots & \binom{a_k+j-1}{j+k-1} \\
\vdots & \ddots & \vdots \\
\binom{a_1+j-1}{j-k+1} & \cdots & \binom{a_k+j-1}{j}
\end{vmatrix}
= \det\left( \binom{a_l+j-1}{j+l-i}_{1\leq i,l\leq k}\right),
\\
\lbrace a_1,\ldots,a_k \rbrace_{j} 
:=&
\begin{vmatrix}
\binom{a_k+j-1}{j} & \cdots & \binom{a_1+j+k-2}{j+k-1} \\
\vdots & \ddots & \vdots \\
\binom{a_k+j-k}{j-k+1} & \cdots & \binom{a_1+j-1}{j}
\end{vmatrix}
= \det\left( \binom{a_{k-l+1}+j+l-i-1}{j+l-i}_{1\leq i,l\leq k}\right)
\\
=&
\begin{vmatrix}
\binom{a_1+j-1}{j} & \cdots & \binom{a_1+j-1}{j+k-1} \\
\vdots & \ddots & \vdots \\
\binom{a_k+j-k}{j-k+1} & \cdots & \binom{a_k+j-k}{j}
\end{vmatrix}
=
 \det\left( \binom{a_{i}+j-i}{j+l-i}_{1\leq i,l\leq k}\right).
\end{align*}

\end{small}

The numbers $[a_1,\ldots,a_k]_{j}$ turn up in enumerative combinatorics, see for example~\cite{BK}, while the numbers $\lbrace a_1,\ldots,a_k \rbrace_{j}$ give the Hilbert polynomial of Schubert varieties due to the formula of Hodge and Pedoe, see Theorem III on page 387 of~\cite{HP} and also~\cite{Nan,Gh}. The equality of the two different formulae for $\lbrace a_1,\ldots,a_k \rbrace_{j}$ is given by Lemma 7 of~\cite{Gh}. In all cases, setting $k=1$ gives the multiset coefficients
$$
\multiset{a}{b}=\binom{a+b-1}{b}.
$$
If all $a_i$ are equal, which has been considered for the numbers $[a_1,\ldots,a_k]_{j}$ in~\cite{BK,MM,St}, then we have the following identity:

\begin{lemma}
Let $r,k-1 \in \ZZ_{\geq 0}$. It holds
$$
\lbrace\underbrace{ r+k-1,\ldots,r+k-1}_{k}  \rbrace_{j}
\ = \
[\underbrace{ r,\ldots,r }_{k}]_{j}
\ = \
[\underbrace{ j,\ldots,j }_{k}]_{r}.
$$
\end{lemma}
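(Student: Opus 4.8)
The plan is to prove the two displayed equalities by elementary operations on the three determinants of binomial coefficients: I would reduce the brace symbol to the first bracket symbol, and relate the two bracket symbols to each other. The structural reason behind the statement is that the common value is the multiset $k$-Narayana number $\mathfrak{N}_k(r,j)$, which a short factorial rearrangement turns into MacMahon's product for the number of plane partitions in a $k\times(r-1)\times j$ box; since that product is symmetric in its three arguments, the lemma records two instances of this symmetry that keep the size $k$ of the determinant fixed. I would keep this picture in mind as motivation but carry out the proof directly on the determinants.

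Because all the $a_i$ coincide, $[\,r,\ldots,r\,]_j=\det\bigl(\binom{r+j-1}{\,j+l-i\,}\bigr)_{1\le i,l\le k}$ is a Toeplitz determinant, its $(i,l)$-entry depending on $i$ and $l$ only through $l-i$. First I would apply the reflection $\binom{r+j-1}{j+m}=\binom{r+j-1}{(r-1)-m}$ to every entry, which is an equality of numbers and hence leaves the determinant unchanged, and then observe that the resulting matrix is the transpose of the Toeplitz matrix of the remaining bracket symbol. As transposition preserves the determinant, this identifies $[\,r,\ldots,r\,]_j$ with the third expression.

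For the brace symbol I would use its second determinantal form, whose $(i,l)$-entry is $\binom{a_i+j-i}{j+l-i}=\binom{(r+j-1)+(k-i)}{j+l-i}$. Its lower indices $j+l-i$ already agree with those of $[\,r,\ldots,r\,]_j$, so only the row-dependent upper index has to be lowered to the constant $r+j-1$. The key identity, immediate from Pascal's rule, is that the $(i,l)$-entry equals $\binom{(r+j-1)+(k-i)-1}{j+l-i}$ plus the $(i+1,l)$-entry; hence subtracting the following row from a given row lowers its upper index by one while fixing both the lower index and the determinant. Performing these subtractions in the correct order brings every upper index down to $r+j-1$, turning the matrix into that of $[\,r,\ldots,r\,]_j$.

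I expect the bookkeeping of this reduction to be the main obstacle. Each row has to be lowered by its own amount $k-i$, so $k-1$ sweeps are required, and in the $t$-th sweep the subtraction may be applied only to those rows whose upper index still exceeds $r+j-1$, namely the top $k-t$ of them; one also has to run each sweep from the top downward so that the row being subtracted is still in the clean single-binomial form to which Pascal's rule applies. A more conceptual route, avoiding this algebra, is to evaluate all three determinants by the Lindstr\"om--Gessel--Viennot lemma: each of them counts a family of non-intersecting lattice paths in bijection with the plane partitions in the relevant box, so that the permutation of the box dimensions --- the same shift of arguments that must be tracked in the reflection step above --- is precisely the symmetry asserted by the lemma.
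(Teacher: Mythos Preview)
Your proposal is correct and follows essentially the same route as the paper: the second equality is obtained by transposing the Toeplitz matrix and applying $\binom{a}{b}=\binom{a}{a-b}$, and the first by the same top-down sweep of row subtractions, using Pascal's rule to decrease the upper binomial index in each of the first $k-1$ rows and iterating $k-1$ times. Your extra bookkeeping on the order of the sweeps and the alternative LGV interpretation are additions not present in the paper, but the core argument coincides.
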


\begin{proof}
The second equality follows directly by transposing the matrix $M_k(r):=\binom{r+j-1}{j+l-i}_{1\leq i,l\leq k}$ and applying the binomial identity $\binom{a}{b}=\binom{a}{a-b}$.
We proof the first one. We see that the last rows of $M_{k}(r)$ and of $M'_{k}(r+k-1):=\binom{r+k+j-i-1}{j+l-i}_{1\leq i,l\leq k}$ are the same. Moreover, the lower arguments of the binomial coefficients in the $(i,l)$-th entry of  $M_{k}(r)$ and of $M'_{k}(r+k-1)$ are the same. The upper arguments in $M'_{k}(r+k-1)$ decrease by one if $i$ increases by one. Now recall the binomial identity
$$
\binom{a}{b}-\binom{a-1}{b-1}=\binom{a-1}{b}.
$$
By the elementary row operations of subtracting the second from the first, the third from the second and so on till we subtract the $k$-th from the $(k-1)$-th row and applying the above binomial identity, we decrease the upper arguments of the binomial coefficients in the first $k-1$ rows of $M'_{k}(r+k-1)$ each by one without changing the determinant. In particular, now the last \emph{two} rows of $M_{k}(r)$ and our new $M'_{k}(r+k-1)$ are the same. We do the same again, now only with the first $k-1$ rows and achieve that the last \emph{three} rows of the two matrices coincide. After doing this $k-1$ times, we have transferred $M'_{k}(r+k-1)$ to $M_{k}(r)$ without changing the determinant and the assertion is proven.
\end{proof}

\begin{definition}
Let  $k,r,j \in \ZZ_{\geq 1}$. Then we call the numbers
 $$
\mathcal{N}_k(r,j):=(\underbrace{r,\ldots,r}_{k})_{j},
\quad 
\mathfrak{N}_k(r,j):=[ \underbrace{r,\ldots,r}_{k}]_{j} = [\underbrace{j,\ldots,j}_{k}]_{r}
 $$
 the \emph{simple $k$-Narayana numbers} and the \emph{multiset $k$-Narayana numbers} respectively and  furthermore
 $$\mathcal{N}_{k,r}(t) := \sum_{j=0}^{r} \mathcal{N}_k(r,j)  t^j,
 \
 \mathfrak{N}_{k,r}(t) := \sum_{j=0}^{\infty} \mathfrak{N}_k(r,j)  t^j,
 \
 N_{k,r}(t) := \sum_{j=0}^{(r-1)(k-1)} N_k(r,j)  t^j
 $$
 the \emph{simple $k$-Narayana polynomial}, the \emph{$k$-Narayana series} and the \emph{$k$-Narayana polynomial} respectively.
\end{definition}

Apart from~\cite{BK}, they turn up in~\cite{MM} and~\cite{St}. In~\cite{BK} as well as in~\cite{St}, closed formulae for $\mathfrak{N}_k(r,j)$ are given, so that we get the following:

\begin{proposition}\label{prop:n-id}
Let $k,r,j \in \ZZ_{\geq 0}$ or  $k\geq 2$, $r >j \geq 0$ respectively. We have the identities
\begin{align*}
\mathfrak{N}_k(r,j) &= \prod\limits_{i=1}^{k} \multiset{j+i}{r-1} \multiset{i}{r-1}^{-1} =  \prod\limits_{i=1}^{k} \multiset{r-1+i}{j} \multiset{i}{j}^{-1},  \\
\mathcal{N}_k(r,j) &= \prod\limits_{i=1}^{k} \binom{r-1+i}{j} \multiset{i}{j}^{-1}
= \prod\limits_{i=0}^{k-1} \binom{r+i}{j} \binom{j+i}{j}^{-1}. \\
\end{align*}
\end{proposition}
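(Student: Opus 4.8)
The plan is to funnel both identities through the single Toeplitz determinant
$$
\mathfrak{N}_k(r,j)=\det\left(\binom{r+j-1}{j+l-i}\right)_{1\le i,l\le k},
$$
whose $(i,l)$ entry depends only on $l-i$. For the $\mathfrak{N}_k$ line I would take the closed product formula for the all-equal bracket $[r,\dots,r]_j$ recorded in \cite{BK,St} and merely translate it into the two multiset shapes. That translation is pure bookkeeping: using the reflection $\multiset{a}{b}=\binom{a+b-1}{b}=\binom{a+b-1}{a-1}=\multiset{b+1}{a-1}$ and cancelling factorials, both
$$
\prod_{i=1}^{k}\multiset{j+i}{r-1}\multiset{i}{r-1}^{-1},\qquad \prod_{i=1}^{k}\multiset{r-1+i}{j}\multiset{i}{j}^{-1}
$$
collapse to the same symmetric product $\prod_{i=1}^{k}\frac{(r+i+j-2)!\,(i-1)!}{(i+j-1)!\,(r+i-2)!}$, which settles the first line and, incidentally, explains why the two displayed forms agree.

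The $\mathcal{N}_k$ line I would deduce from the $\mathfrak{N}_k$ line rather than evaluate afresh. Comparing entries, $\binom{r-1}{j+l-i}=\binom{(r-j)+j-1}{j+l-i}$, so the defining determinants coincide after a shift and
$$
\mathcal{N}_k(r,j)=(r,\dots,r)_j=\mathfrak{N}_k(r-j,j).
$$
Substituting $r\mapsto r-j$ into the product just obtained and simplifying the multiset coefficients via $\multiset{(r-j)+i-1}{j}=\binom{r+i-2}{j}$ yields the product formula for $\mathcal{N}_k(r,j)$; the passage between its two displayed forms is the reindexing $i\mapsto i-1$ together with $\multiset{i}{j}=\binom{j+i-1}{j}$.

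If one prefers a self-contained evaluation of $\mathfrak{N}_k$ in place of citing \cite{BK,St}, I would run the Lindstr\"om--Gessel--Viennot lemma. Placing sources $A_i=(-i,i)$ and sinks $B_l=(r-1-l,\,j+l)$ and using monotone north/east lattice paths, the number of paths $A_i\to B_l$ is exactly $\binom{r+j-1}{j+l-i}$. One checks that the two point configurations are ordered so that only the identity permutation admits a non-intersecting family, whence the determinant counts families of non-intersecting paths; these are in standard bijection with column-strict fillings (equivalently plane partitions in a box), and the hook--content/MacMahon product for that count is precisely the multiset product above.

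I expect the determinant evaluation to be the only real obstacle: either matching the product in \cite{BK,St} to the multiset normalization, or, on the combinatorial route, pinning down the exact box dimensions so that the hook--content product lands on $\prod_{i=1}^{k}\multiset{r-1+i}{j}\multiset{i}{j}^{-1}$ rather than a cousin of it. Everything else---the reflection symmetry of the multiset coefficients, the shift $\mathcal{N}_k(r,j)=\mathfrak{N}_k(r-j,j)$, and the reindexing between paired forms---is routine factorial manipulation, of the same flavour as the row operations already used in the preceding Lemma.
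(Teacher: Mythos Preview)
Your plan is essentially the paper's own proof: cite \cite{BK} for the first $\mathfrak{N}_k$ product, pass to the second form via the $(r,j)\leftrightarrow(j{+}1,r{-}1)$ symmetry (you by direct factorial cancellation, the paper by invoking the preceding lemma $[r,\dots,r]_j=[j,\dots,j]_r$), and then obtain the $\mathcal{N}_k$ formulae from $\mathfrak{N}_k$ by the determinantal shift $\mathcal{N}_k(r,j)=\mathfrak{N}_k(r-j,j)$ followed by a reindexing---exactly the ``set $r':=r+j-1$ in the second one'' and ``simple index shift'' of the paper. The optional LGV/hook--content derivation you sketch is a legitimate self-contained alternative to citing \cite{BK}, but the paper does not pursue it.
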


\begin{proof}
The first identity is Theorem 3.3 in~\cite{BK}. The second follows by interchanging $j+1$ and $r$. The third by setting $r':=r+j-1$ in the second one and the last is a simple index shift.
\end{proof}

\begin{corollary}\label{cor:narid}
We have the following relations between $k$-Narayana numbers and polynomials:
\begin{align*}
\mathfrak{N}_k(r,j) &= \mathcal{N}_k(r+j-1,j) \\
                     &= \sum\limits_{l\geq 0} \binom{k(r-1)+j-l}{k(r-1)}N_k(r-1,l), \\
N_k(r,j) &= \sum\limits_{l=0}^{j}(-1)^{j-l} \binom{kr+1}{j-l} \mathcal{N}_k(r+l,l) \\
         &= \sum\limits_{l=0}^{j}(-1)^{j-l} \binom{kr+1}{j-l} \mathfrak{N}_k(r+1,l)\\
\mathfrak{N}_{k,r}(t) &= \frac{N_{k,r}(t)}{(1-t)^{k(r-1)+1}}
\end{align*}
\end{corollary}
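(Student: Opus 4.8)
The plan is to split the five identities into two groups. The first three, $\mathfrak{N}_k(r,j)=\mathcal{N}_k(r+j-1,j)$ and the two expansions of $N_k(r,j)$, are pure manipulations of the product formulae of Proposition~\ref{prop:n-id}; the last two, the expansion of $\mathfrak{N}_k(r,j)$ and the rational expression for $\mathfrak{N}_{k,r}(t)$, are the coefficientwise and the generating-function form of one and the same inversion. For the first identity I would start from the two shapes $\mathfrak{N}_k(r,j)=\prod_{i=1}^{k}\multiset{r-1+i}{j}\multiset{i}{j}^{-1}$ and $\mathcal{N}_k(r,j)=\prod_{i=1}^{k}\binom{r-1+i}{j}\multiset{i}{j}^{-1}$ of Proposition~\ref{prop:n-id}. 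Writing $\multiset{r-1+i}{j}=\binom{r+i+j-2}{j}$ and substituting $r\mapsto r+j-1$ in the factor $\binom{r-1+i}{j}$ of $\mathcal{N}_k$ turns it into $\binom{r+j+i-2}{j}$; since the factors $\multiset{i}{j}^{-1}$ are untouched, the two products agree termwise. This is exactly the reindexing already used in the proof of Proposition~\ref{prop:n-id}, so the identity costs nothing beyond bookkeeping.

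For the two expansions of $N_k(r,j)$ the only task is to recognise the inner factor $\prod_{i=0}^{k-1}\binom{r+i+l}{r}\binom{r+i}{r}^{-1}$ of the defining sum as one of the named quantities. Cancelling factorials shows $\binom{r+i+l}{r}\binom{r+i}{r}^{-1}=\binom{r+l+i}{l}\binom{l+i}{l}^{-1}$, and comparing with the second product formula of Proposition~\ref{prop:n-id} for $\mathcal{N}_k$ identifies the product with $\mathcal{N}_k(r+l,l)$; this gives the expansion against $\mathcal{N}_k$ at once. The expansion against $\mathfrak{N}_k$ then follows from the first identity specialised to $\mathcal{N}_k(r+l,l)=\mathfrak{N}_k(r+1,l)$.

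For the remaining two statements I would pass to generating functions. Since $\sum_{m\ge 0}(-1)^m\binom{N}{m}t^m=(1-t)^{N}$, the alternating $\binom{\cdot}{j-l}$-convolution in the definition of $N_k$ is simply multiplication by $(1-t)^{N}$; hence the defining formula reads $N_{k,r}(t)=(1-t)^{N}\mathfrak{N}_{k,\bullet}(t)$ for the appropriate exponent $N$ and index. Dividing through, and shifting the index, yields the rational expression $\mathfrak{N}_{k,r}(t)=N_{k,r}(t)/(1-t)^{k(r-1)+1}$. To descend to the coefficientwise statement I would expand $1/(1-t)^{k(r-1)+1}=\sum_{n\ge 0}\binom{n+k(r-1)}{k(r-1)}t^n$ and read off the coefficient of $t^j$ in the product $N_{k,\bullet}(t)\cdot(1-t)^{-k(r-1)-1}$; the resulting convolution is precisely the displayed sum.

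The genuine content hidden here is not the inversion but the degree claim implicit in the definition $N_{k,r}(t)=\sum_{j=0}^{(r-1)(k-1)}N_k(r,j)t^j$: one must know that $N_{k,r}(t)$ is a polynomial of degree at most $(r-1)(k-1)$. Since each factor $\binom{r+i+j-1}{r-1}$ of $\mathfrak{N}_k(r,j)$ is a polynomial of degree $r-1$ in $j$, the sequence $\mathfrak{N}_k(r,\cdot)$ is a polynomial of degree $k(r-1)$, and the general theory of generating functions of polynomial sequences gives only $\deg N_{k,r}\le k(r-1)$. Closing the gap of $r-1$, i.e.\ proving that $N_k(r,j)=0$ for $(r-1)(k-1)<j\le k(r-1)$, is where the work lies; I expect to settle it either through the lattice-path model recalled at the start of this section, in which $(r-1)(k-1)$ is visibly the maximal possible number of ascents, or by a direct finite-difference estimate showing that $(1-t)^{k(r-1)+1}$ annihilates the top $r-1$ coefficients. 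I would treat this vanishing as the crux and arrange the remaining reconciliation of indices and summation ranges around it.
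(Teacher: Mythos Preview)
Your treatment of the first, third, and fourth identities is exactly what the paper does: it says these ``follow directly from the definitions and Proposition~\ref{prop:n-id}'', and your manipulations with the product formulae spell that out.

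For the second and fifth identities you go further than the paper, which does not prove them but simply cites Sulanke (Proposition~4 and page four of~\cite{Su2}, respectively). Your plan---to recognise both as the coefficientwise and generating-function forms of the single convolution identity encoded in the very definition of $N_k(r,j)$---is correct and would make the corollary self-contained. You are also right that the only substantive point hidden behind the fifth identity is the degree bound $\deg N_{k,r}\le (k-1)(r-1)$, i.e.\ the vanishing of the top $r-1$ coefficients; your proposed route through the lattice-path model, where $(k-1)(r-1)$ is visibly the maximal number of ascents on a monotone path to $(r,\ldots,r)$, is the standard way to see this. The paper bypasses the issue entirely by importing both statements from~\cite{Su2}, so your approach buys self-containment at the price of that extra combinatorial step.
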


\begin{proof}
The second equality is Proposition 4 of~\cite{Su2}, the last one is stated on page four of~\cite{Su2}, the others follow directly from the definitions and Proposition~\ref{prop:n-id}.
\end{proof}

Now we express the $k$-Narayana polynomials and series in terms of hypergeometric functions in order to find simplified formulae using a generalized Euler transform.

\begin{proposition}
Let $\mathcal{N}_{k,r}(t)$ be the simple $k$-Narayana polynomial and $\mathfrak{N}_{k,r}(t)$ the $k$-Narayana series. Let further 
${}_pF_q\left(
\begin{smallmatrix}
a_1,\ldots,a_p\\
b_1,\ldots,b_q
\end{smallmatrix};t\right)$
 be the generalized hypergeometric function. Then we have
\begin{align*}
\mathcal{N}_{k,r}(t) &= {}_{k}F_{k-1}\left(
\begin{matrix}
-r,\ldots,-r-k+1\\
2,\ldots,k
\end{matrix};(-1)^{k}t\right), \\
\mathfrak{N}_{k,r}(t) &= {}_{k}F_{k-1}\left(
\begin{matrix}
r,\ldots,r+k-1\\
2,\ldots,k
\end{matrix};t\right).
\end{align*}
\end{proposition}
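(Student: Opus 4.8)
The plan is to express each $k$-Narayana number as a ratio of Pochhammer symbols and then recognize the resulting power series as a generalized hypergeometric function by reading off its coefficients. Recall that the generalized hypergeometric function is defined by
$$
{}_pF_q\left(
\begin{matrix}
a_1,\ldots,a_p\\
b_1,\ldots,b_q
\end{matrix};t\right)
=
\sum_{j=0}^{\infty}
\frac{(a_1)_j\cdots(a_p)_j}{(b_1)_j\cdots(b_q)_j}\,
\frac{t^j}{j!},
$$
where $(a)_j = a(a+1)\cdots(a+j-1)$ denotes the rising factorial. So the task reduces to showing that the $j$-th coefficient of each Narayana series is exactly the quotient of Pochhammer symbols predicted by the claimed parameters.

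**First I would** treat the series $\mathfrak{N}_{k,r}(t)$, whose coefficients $\mathfrak{N}_k(r,j)$ are given explicitly in Proposition~\ref{prop:n-id} by
$$
\mathfrak{N}_k(r,j)
=
\prod_{i=1}^{k}\multiset{r-1+i}{j}\multiset{i}{j}^{-1}
=
\prod_{i=1}^{k}\binom{r-1+i+j}{j}\binom{i-1+j}{j}^{-1},
$$
using the identity $\multiset{a}{b}=\binom{a+b-1}{b}$. Writing $\binom{r-1+i+j}{j}=\frac{(r+i-1)(r+i)\cdots(r+i+j-2)}{j!}=\frac{(r+i-1)_j}{j!}$ and similarly $\binom{i-1+j}{j}=\frac{(i)_j}{j!}$ for the denominator factors, the factorials $j!$ cancel in a controlled way and one obtains
$$
\mathfrak{N}_k(r,j)
=
\frac{(r)_j(r+1)_j\cdots(r+k-1)_j}{(2)_j(3)_j\cdots(k)_j}\cdot\frac{1}{j!},
$$
after noting that the $i=1$ denominator factor $(1)_j=j!$ supplies exactly the extra $1/j!$ required by the hypergeometric normalization. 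This is precisely the general term of ${}_kF_{k-1}$ with upper parameters $r,r+1,\ldots,r+k-1$ and lower parameters $2,3,\ldots,k$ evaluated at $t$, giving the second formula.

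**For the simple polynomial** $\mathcal{N}_{k,r}(t)$ the same strategy applies, now starting from the expression $\mathcal{N}_k(r,j)=\prod_{i=0}^{k-1}\binom{r+i}{j}\binom{j+i}{j}^{-1}$ of Proposition~\ref{prop:n-id}. The new feature is that the upper index $r+i$ is \emph{fixed} rather than growing with $j$, so I would use $\binom{r+i}{j}=\frac{(r+i)!}{j!\,(r+i-j)!}$ and rewrite this via the reflection $\binom{r+i}{j}=(-1)^j\frac{(-r-i)_j}{j!}$, which converts a falling factorial into a rising one and explains the appearance of the \emph{negative} upper parameters $-r,-r-1,\ldots,-r-k+1$. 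The $k$ sign factors $(-1)^j$ combine into $(-1)^{kj}=\bigl((-1)^k\bigr)^j$, which is absorbed by evaluating the hypergeometric function at $(-1)^k t$ instead of $t$; the lower parameters $2,\ldots,k$ and the surviving $1/j!$ arise exactly as before from the factors $\binom{j+i}{j}^{-1}=(i+1)_j^{-1}\cdot(\text{normalization})$. Matching coefficients then yields the first formula.

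**The main obstacle** is purely bookkeeping: keeping the shifted indices straight so that the product over $i$ lands on consecutive Pochhammer parameters and so that exactly one leftover factor of $1/j!$ remains to match the hypergeometric normalization $t^j/j!$ (rather than $t^j$). In particular one must check that the ranges $i=1,\ldots,k$ versus $i=0,\ldots,k-1$ produce the stated lower parameters $2,\ldots,k$ in both cases and that no stray factor of $j!$ survives; once the Pochhammer rewriting is set up carefully this is a routine verification, and the sign analysis in the $\mathcal{N}_{k,r}$ case is the only place where genuine care is needed.
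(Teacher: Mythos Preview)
Your approach is essentially the paper's own: start from the product formulae of Proposition~\ref{prop:n-id}, rewrite each binomial (or multiset) coefficient as a ratio of a rising factorial and $j!$, and read off the hypergeometric parameters; the reflection $\binom{r+i}{j}=(-1)^{j}\,(-r-i)_j/j!$ you use for $\mathcal{N}_{k,r}$ is exactly what produces the negative upper parameters and the argument $(-1)^{k}t$. One small slip to fix: when you convert $\multiset{r-1+i}{j}$ you should get $\binom{r+i+j-2}{j}$, not $\binom{r-1+i+j}{j}$; your subsequent Pochhammer expansion $(r+i-1)_j/j!$ is nonetheless the correct value, so the error is purely typographical and the argument goes through.
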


\begin{proof}
We have
\begin{align*}
\mathfrak{N}_{k,r}(t)
&= 
\sum_{j \geq 0} \prod\limits_{i=1}^{k} \multiset{j+i}{r-1} \multiset{i}{r-1}^{-1} t^j 
\\
&=
\sum_{j \geq 0} \frac{(j+r-1)!\cdots(j+r+k-2)!}{(r-1)!^k j! \cdots (j+k-1)!} \frac{(r-1)!^k 1!\cdots k!}{(r-1)! \cdots (r+k-2)!} t^j
\\
&=
\sum_{j \geq 0} \frac{(r)_j\cdots(r+k-1)_j}{(2)_j \cdots (k)_j} \frac{t^j}{j!}
=
{}_{k}F_{k-1}\left(
\begin{matrix}
r,\ldots,r+k-1\\
2,\ldots,k
\end{matrix};t\right),
\\
\mathcal{N}_{k,r}(t)
&=
\sum_{j=0}^{r} \prod\limits_{i=0}^{k-1} \binom{r+i}{j} \binom{j+i}{j}^{-1}  t^j
\\
&=
\sum_{j=0}^{r} \frac{r!\cdots(r+k-1)!}{j!^k (r-j)! \cdots (r+k-1-j)!} \frac{j!^k 1!\cdots (k-1)!}{j! \cdots (j+k-1)!}  t^j
\\
&=
\sum_{j=0}^{r} \frac{(-r)_j\cdots(-r-k+1)_j}{(2)_j\cdots(k)_j}   \frac{(-1)^{kj}t^j}{j!}
\\
&=
{}_{k}F_{k-1}\left(
\begin{matrix}
-r,\ldots,-r-k+1\\
2,\ldots,k
\end{matrix};(-1)^{k}t\right).
\end{align*}

\end{proof}

\section{The Hilbert series of the Grassmannian}
Denote by $\GR(k,n)$ the complex Grassmannian of $k$-dimensional vector subspaces in $n$-dimensional vector space. The Hilbert series of its homogeneous coordinate ring under the Pl\"ucker embedding has been investigated for example in~\cite{Chip, GN, Ho1, Ho2, HP, Mu, SSW, Stu}. Mukai gives a closed formula for the Hilbert polynomial in terms of binomial coefficients in~\cite{Mu} while~\cite{GN} gives a closed formula for the generating rational function, both in the case $\GR(2,n)$.
In~\cite{Ho1}, Hodge conjectured a closed formula for the Hilbert polynomial in the general case $\GR(k,n)$, which was proved by Littlewood in ~\cite{Li}. This formula is the following:
$$
d_{k,n}(j)=\frac{(n+j)!\cdots (n+j-k)! }{j! \cdots (k+j)! }\frac{1! \cdots k!}{(n-k)! \cdots n!}.
$$
The homogeneous coordinate ring of $\GR(k,n)$ equals $\CC\left[V^{n}\right]^{\SL_k}$, where the action of $\SL_k$ on $V^{n}$ is induced by multiplication from the left on the $k$-dimensional vector space $V$, see for example~\cite{LaBr}, Theorem 9.3.6. So the Hilbert series of $\GR(k,n)$ and $\CC\left[V^{n}\right]^{\SL_k}$ coincide.

The following theorem gives a formula for the Hilbert series in terms of binomial coefficients and an expression of the Hilbert series as a rational function.

\begin{theorem}\label{th:hilbgrass}
The $h$-polynomial of the complex Grassmannian $\GR(k,n)$ is the $k$-Narayana polynomial $N_{k,n-k+1}$ and its Hilbert series is the $k$-Narayana series $\mathfrak{N}_{k,n-k+1}$. That is to say:
$$
H(\GR(k,n))  = \mathfrak{N}_{k,n-k+1}(t^k) =  \frac{N_{k,n-k+1}(t)}{(1-t^k)^{k(n-k)+1}}.
$$
\end{theorem}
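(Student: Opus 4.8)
The plan is to compute the graded dimensions directly from the identification $R=\CC[V^n]^{\SL_k}$ recalled above, grading $\CC[V^n]=\CC[\Mat_{k\times n}]$ by the polynomial degree in the matrix entries $x_{ij}$. Since the Pl\"ucker coordinates are the maximal minors, each homogeneous of degree $k$, the first fundamental theorem for $\SL_k$ shows that $R$ is concentrated in degrees divisible by $k$, with polynomial degree $mk$ corresponding to Pl\"ucker degree $m$; this is already the source of the substitution $t\mapsto t^k$ in the statement. First I would invoke Cauchy's formula
$$
\CC[\Mat_{k\times n}]
\ = \
\bigoplus_{\ell(\lambda)\le k} S_\lambda(\CC^k)^{*}\otimes S_\lambda(\CC^n)
$$
as a module over $\GL_k\times\GL_n$, the sum running over partitions with at most $k$ parts, where $S_\lambda$ denotes the Schur module.

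Next I would take $\SL_k$-invariants. The factor $S_\lambda(\CC^k)^{*}$ contributes a one-dimensional space of invariants precisely when $\lambda$ is the rectangle $(m,\dots,m)$ with $k$ equal rows, because $S_{(m^k)}(\CC^k)$ is a power of the determinant representation and hence $\SL_k$-trivial, while every other irreducible has no invariants. Therefore the degree-$mk$ part of $R$ is isomorphic to $S_{(m^k)}(\CC^n)$, and all other graded pieces vanish. The Hilbert function in Pl\"ucker degree $m$ is thus $\dim S_{(m^k)}(\CC^n)$, which by the Weyl dimension formula collapses for a rectangular shape to
$$
\dim S_{(m^k)}(\CC^n)
\ = \
\prod_{a=1}^{k}\prod_{b=k+1}^{n}\frac{m+b-a}{b-a}
\ = \
\frac{\prod_{i=0}^{k-1}(n-k+m+i)!\,\prod_{i=0}^{k-1}i!}{\prod_{i=0}^{k-1}(n-k+i)!\,\prod_{i=0}^{k-1}(m+i)!},
$$
which is the Hilbert polynomial of $\GR(k,n)$ recalled above.

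The key step is then to recognise this ratio as a multiset $k$-Narayana number. Setting $r=n-k+1$ in the product formula $\mathfrak{N}_k(r,j)=\prod_{i=1}^{k}\multiset{r-1+i}{j}\multiset{i}{j}^{-1}$ of Proposition~\ref{prop:n-id} and expanding the multiset coefficients $\multiset{a}{b}=\binom{a+b-1}{b}$ into factorials produces exactly the same shifted blocks, so that $\dim S_{(m^k)}(\CC^n)=\mathfrak{N}_k(n-k+1,m)$ for all $m\ge 0$. I expect this factorial matching to be the main obstacle, although it is purely computational: one must align the two families of factorials and verify that numerators and denominators agree factor by factor. Summing over $m$ and accounting for the degree shift then yields
$$
H(\GR(k,n))
\ = \
\sum_{m\ge 0}\mathfrak{N}_k(n-k+1,m)\,t^{mk}
\ = \
\mathfrak{N}_{k,n-k+1}(t^k).
$$

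Finally, to pass to the rational form and read off the $h$-polynomial, I would apply the identity $\mathfrak{N}_{k,r}(t)=N_{k,r}(t)/(1-t)^{k(r-1)+1}$ of Corollary~\ref{cor:narid}. Replacing the variable $t$ by $t^k$ and setting $r=n-k+1$, so that $k(r-1)+1=k(n-k)+1$, gives
$$
H(\GR(k,n))
\ = \
\frac{N_{k,n-k+1}(t^k)}{(1-t^k)^{k(n-k)+1}},
$$
whose denominator has the predicted degree $k(k(n-k)+1)$ and whose numerator is the $k$-Narayana polynomial. This identifies $N_{k,n-k+1}$ as the $h$-polynomial and completes the argument.
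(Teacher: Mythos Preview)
Your argument is correct but follows a genuinely different route from the paper. The paper computes the constant term of the $q$-Hilbert series (essentially the Weyl integration formula) by rewriting the numerator $\prod_{\alpha\in\Phi^+}(1-\chi_\alpha)$ as a Vandermonde determinant via Lemma~\ref{le:van}, expanding the denominator as a multiset series, and then extracting the $q^0$-coefficient; this lands naturally on the determinantal expression $\{n,\ldots,n\}_l=\bigl|\multiset{n}{l-i+j}\bigr|$, which is then identified with $[n-k+1,\ldots,n-k+1]_l=\mathfrak{N}_k(n-k+1,l)$ using the lemma proved in Section~\ref{sec:nar}. You instead invoke the Cauchy decomposition of $\CC[\Mat_{k\times n}]$ as a $\GL_k\times\GL_n$-module, take $\SL_k$-invariants to pick out the rectangular summands $S_{(m^k)}(\CC^n)$, and evaluate their dimensions by the Weyl dimension formula; the resulting product matches the closed form of $\mathfrak{N}_k(n-k+1,m)$ from Proposition~\ref{prop:n-id} directly, bypassing the determinantal intermediary altogether. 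Your approach is shorter and more representation-theoretic, and it makes the link to Hodge's classical Hilbert polynomial transparent. The paper's method, on the other hand, produces the determinant $\{a_1,\ldots,a_k\}_j$ as an intrinsic output of the torus computation, which motivates and dovetails with the subsequent discussion of Schubert varieties. One small remark: after substituting $t\mapsto t^k$ in Corollary~\ref{cor:narid} you should obtain $N_{k,n-k+1}(t^k)$ in the numerator rather than $N_{k,n-k+1}(t)$; the paper's displayed formula has the same slip, so your final line is in fact the more careful version.
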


\begin{lemma}\label{le:van}
For $k\geq 2$,  the following equality holds in $\CC\left( z_1,\ldots,z_k \right)$:
$$
\prod\limits_{1 \leq i < j \leq k} \left(1-\frac{z_i}{z_j}\right)
=
\left|
\left(z_i^{j-i}\right)_{1\leq i,j \leq k }
\right|
$$
\end{lemma}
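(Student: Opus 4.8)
The plan is to recognize the right-hand side as a shifted Vandermonde determinant and then to bring both sides into the same shape, namely a monomial times the product of the factors $z_j-z_i$.

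First I would factor a monomial out of each row of the matrix $(z_i^{j-i})$. Since the $(i,j)$-entry equals $z_i^{1-i}\cdot z_i^{j-1}$, pulling $z_i^{1-i}$ out of the $i$-th row gives
$$
\left|\left(z_i^{j-i}\right)_{1\leq i,j\leq k}\right|
=
\left(\prod_{i=1}^k z_i^{1-i}\right)\left|\left(z_i^{j-1}\right)_{1\leq i,j\leq k}\right|.
$$
The remaining determinant is the classical Vandermonde determinant, so by the standard formula it equals $\prod_{1\leq i<j\leq k}(z_j-z_i)$. Hence the right-hand side equals $\bigl(\prod_{i=1}^k z_i^{1-i}\bigr)\prod_{1\leq i<j\leq k}(z_j-z_i)$.

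Next I would rewrite the left-hand side by splitting each factor as $1-\tfrac{z_i}{z_j}=\tfrac{z_j-z_i}{z_j}$, which yields
$$
\prod_{1\leq i<j\leq k}\left(1-\frac{z_i}{z_j}\right)
=
\frac{\prod_{1\leq i<j\leq k}(z_j-z_i)}{\prod_{1\leq i<j\leq k} z_j}.
$$
For each fixed $j$ there are exactly $j-1$ indices $i$ with $i<j$, so the denominator equals $\prod_{j=1}^k z_j^{j-1}$, whose reciprocal is precisely $\prod_{j=1}^k z_j^{1-j}$. Thus the left-hand side also equals $\bigl(\prod_{i=1}^k z_i^{1-i}\bigr)\prod_{1\leq i<j\leq k}(z_j-z_i)$.

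Comparing the two expressions gives the claimed identity in $\CC(z_1,\ldots,z_k)$. The computation is essentially routine; the only point that demands attention is the exponent bookkeeping, that is, checking that the monomial prefactor produced by the row-scaling matches the one produced by clearing the denominators of the left-hand side. This is a multiplicity count rather than a genuine obstacle, and the case $k=2$ serves as a convenient sanity check where both sides reduce to $1-z_1/z_2$.
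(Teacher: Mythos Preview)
Your proof is correct and follows essentially the same route as the paper: factor $z_i^{1-i}$ from each row to reduce the determinant to the standard Vandermonde, then match the resulting monomial prefactor with the one obtained by clearing denominators in $\prod_{i<j}(1-z_i/z_j)$. The paper is slightly terser in the last step, but the argument is the same.
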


\begin{proof}
The Vandermonde matrix 
$$
\left(z_i^{j-1}\right)_{1\leq i,j \leq k }
$$ 
has the determinant
$$
\left|
\left(z_i^{j-1}\right)_{1\leq i,j \leq k }
\right|
=
\prod\limits_{1 \leq i < j \leq k} \left(z_j-z_i\right).
$$
So we have
\begin{align*}
\left|
\left(z_i^{j-i}\right)_{1\leq i,j \leq k }
\right|
=&
\left(\prod\limits_{i=1}^{k}z_i^{1-i} \right) 
\left|
\left(z_i^{j-1}\right)_{1\leq i,j \leq k }
\right| \\
=&
\left(\prod\limits_{i=1}^{k}z_i^{1-i} \right)\left(\prod\limits_{1 \leq i < j \leq k} \left(z_j-z_i\right)\right) \\
=& \prod\limits_{1 \leq i < j \leq k} \left(1-\frac{z_i}{z_j}\right).
\end{align*}
\end{proof}

\begin{proof}[Proof of Theorem~\ref{th:hilbgrass}]
Let $n\geq k$.
We begin with what Mukai~\cite{Mu} calls the \emph{$q$-Hilbert series} of the action of $SL_k$ on $V^n$ induced by multiplication from the left on the $k$-dimensional vector space $V$. 

Let $T=SL_k \cap {\rm Diag}(k) \cong \left(\CC^*\right)^{k-1}$ be the standard maximal torus of $SL_k$ and $X(T) \cong \ZZ^{k-1}$ its character lattice with standard basis $e_1,\ldots,e_{k-1}$. Let 
$$
\lambda_i := \sum\limits_{j=1}^{i} e_j \in X(T), \quad i=1,\ldots,k-1
$$
be the fundamental weights and 
\begin{align*}
\alpha_1 &:= e_1-e_{2}  = -\lambda_{2}+2\lambda_1 &&\in X(T) \\ 
\alpha_i &:= e_i-e_{i+1}  = -\lambda_{i+1}+2\lambda_i-\lambda_{i-1} &&\in X(T), \quad i=2,\ldots,k-2 \\ 
\alpha_{k-1} &:= 2e_{k-1} + \sum\limits_{j=1}^{k-2} e_j = 2\lambda_{k-1}-\lambda_{k-2} &&\in X(T)
\end{align*}
the simple roots. Define $q_i:=\chi_{\lambda_i}$ for $i=1,\ldots,k-1$. So the set  of positive roots is
\begin{align*}
\Phi^+ 
&= \left\{ \left. \sum\limits_{i=a}^b \alpha_i \right| 1 \leq a\leq b \leq k-1\right\} \\ 
&= \left\{ \left. \frac{q_a q_b}{q_{a-1} q_{b+1}} \right| 1 \leq a\leq b \leq k-1, q_0=q_k=1\right\}.
\end{align*} 
 On $V$, the action of $T$ is given by the matrix
$$
{\rm Diag}\left( q_1, q_2q_1^{-1},q_3q_2^{-1},\ldots, q_{k-1}q_{k-2}^{-1}, q_{k-1}^{-1} \right).
$$
Now with~\cite{KD}, Remark 4.6.10, the Hilbert series of $\CC[V^n]^{\SL_k}$ is the coefficient of $q_1^0\cdots q_{k-1}^0$ when we set $q_0=q_k=1$ in the $q$-Hilbert series:
\begin{align*}
H_q(\CC[V^n]^{\SL_k})
&= 
\frac{\prod\limits_{1 \leq a\leq b \leq k-1} \left(1-\frac{q_a q_b}{q_{a-1} q_{b+1}}\right)}{\prod\limits_{j=1}^k \left(1 - t \frac{q_j}{q_{j-1}} \right)^{n}} \\
&= \frac{
\left|
\left(\left(\frac{q_i}{q_{i-1}}\right)^{j-i}\right)_{1\leq i,j \leq k }
\right|
}{\prod\limits_{j=1}^k \left(1 - t \frac{q_j}{q_{j-1}}  \right)^{n}}\\
&=
\left|
\left(\left(\frac{q_i}{q_{i-1}}\right)^{j-i}\right)_{1\leq i,j \leq k }
\right|
\sum_{i_1,\ldots,i_k=0}^\infty \prod_{j=1}^k \multiset{n}{i_j}\left(t\frac{q_j}{q_{j-1}}\right)^{i_j}  
\\
&=
\sum_{\sigma \in S_k} {\rm sgn}(\sigma) 
\sum_{i_1,\ldots,i_k=0}^\infty \prod_{j=1}^k \multiset{n}{i_j} t^{i_j} \left(\frac{q_j}{q_{j-1}}\right)^{\sigma(j)+i_j-j}
\end{align*}
For the second equality, we used Lemma~\ref{le:van} with $z_i=q_i/q_{i-1}$. Now for the coefficient of $q_1^0\cdots q_{k-1}^0$, for each $\sigma \in S_k$, all the exponents $\sigma(j)+i_j-j$ must be the same, so that we get:
\begin{align*}
H(\CC[V^n]^{\SL_k})
&=
\sum_{\sigma \in S_k} {\rm sgn}(\sigma) 
\sum_{\tiny
\begin{array}{c}
i_1,\ldots,i_k \geq 0 \\
~\sigma(1)+i_1-1\\
=\ldots=\\
\sigma(k)+i_k-k
\end{array}
} \prod_{j=1}^k \multiset{n}{i_j} t^{i_j} 
\\
&= 
\sum_{\sigma \in S_k} {\rm sgn}(\sigma) 
\sum_{\tiny
\begin{array}{c}
i_1 \geq \sigma(1)-1\\
,\ldots, \\
 i_k \geq \sigma(k)-k \\
i_1=\ldots=i_k
\end{array}
} \prod_{j=1}^k \multiset{n}{i_j-\sigma(j)+j} t^{i_j-\sigma(j)+j} 
\\
&=
\sum_{\sigma \in S_k} {\rm sgn}(\sigma) 
\sum_{l=0} \prod_{j=1}^k \multiset{n}{l-\sigma(j)+j} t^{l-\sigma(j)+j} 
\\
&=
\sum_{l=0}^{\infty} t^{kl} \sum_{\sigma \in S_k} {\rm sgn}(\sigma) 
\multiset{n}{l-\sigma(j)+j} 
=
\sum_{l=0}^{\infty}  \left| \multiset{n}{l-i+j}_{1\leq i,j \leq k} \right|  t^{kl} \\
&=
\sum_{l=0}^{\infty} \{n,\ldots,n\}_l  t^{kl} 
=
\sum_{l=0}^{\infty} [n-k+1,\ldots,n-k+1]_l  t^{kl} \\
&= \mathfrak{N}_{k,n-k+1}(t^k) = \frac{N_{k,n-k+1}(t)}{(1-t^k)^{k(n-k)+1}}.
\end{align*}
The second and third equalities hold because for each $\sigma$, if there is a $1\leq j \leq k$ with $\sigma(j)-j >0$, there must be a $j'$ with $\sigma(j')-j' >0$. Furthermore, since $k \leq n$, for all $1\leq j \leq k$ we have $-n < \sigma(j)-j $. Thus the additional as well as the removed summands all are zero and the second and third equality hold.
The last identity directly follows from Corollary~\ref{cor:narid}.

\end{proof}

\begin{remark}
Of course the quotient of $V^n$ by $SL_k$ is also defined for $n<k$, but the only invariant functions in these cases are the constant ones, so that $H(\CC[V^n]^{\SL_k})=1$ holds here.

The equality of our formula for the Hilbert polynomial and that of Hodge is immediate. There is another way to compute it using the Borel-Weil-Bott Theorem, see Section 5.3 of~\cite{Chip}.
For the connection to the approach~\cite{Stu} of Sturmfels, see also the paper~\cite{SSW}.
\end{remark}

As we already stated in Section~\ref{sec:nar}, the numbers $\lbrace a_1,\ldots,a_k \rbrace_{j}$ with $1\leq a_1 < \ldots < a_k \leq n$ give the Hilbert polynomials of  Schubert varieties 
$$
X(a_1,\ldots,a_k):=\lbrace W \in \GR(k,n) | \dim(W \cap <e_1,\ldots,e_{a_i}>) \geq i,i=1,\ldots,k\rbrace.
$$
From this, Nanduri in~\cite{Nan} deduced a closed form for the $h$-polynomial of $X(a_1,\ldots,a_k)$. We give a slightly simpler formula in the following, which reduces to the formula of Sulanke for the $k$-Narayana numbers when we set $a_1=\ldots=a_k=n$.

\begin{proposition}
The $i$-th coefficient $h_i$ of the $h$-polynomial of the Schubert variety $X(a_1,\ldots,a_k)$ of dimension $d$ is given by
$$
h_i= \sum_{l=0}^i (-1)^l \lbrace a_1,\ldots,a_k \rbrace_{i-l} \binom{d}{l}.
$$
\end{proposition}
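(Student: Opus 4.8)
The statement relates the $h$-polynomial coefficients $h_i$ of the Schubert variety $X(a_1,\ldots,a_k)$ to the numbers $\lbrace a_1,\ldots,a_k\rbrace_j$, which we already know give the Hilbert polynomial. The key structural fact is that for a projective variety of dimension $d$ whose Hilbert function agrees (eventually) with a polynomial, the Hilbert series takes the form $H(X) = h(t)/(1-t)^{d+1}$, where $h(t) = \sum_i h_i t^i$ is the $h$-polynomial and the Hilbert polynomial has degree $d$. The relation between the Hilbert values and the $h$-coefficients is then purely formal: multiplying out $h(t) = H(X)\cdot(1-t)^{d+1}$ and extracting the coefficient of $t^i$ gives an alternating binomial sum of the Hilbert values. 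I would carry this out explicitly.

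First I would recall that $\lbrace a_1,\ldots,a_k\rbrace_j$ is the Hilbert polynomial value in degree $j$, so that the Hilbert series is
\begin{equation*}
H(X(a_1,\ldots,a_k)) \ = \ \sum_{j\geq 0} \lbrace a_1,\ldots,a_k\rbrace_j\, t^j.
\end{equation*}
Next I would write $h(t) = (1-t)^{d+1} H(X)$ and expand $(1-t)^{d+1} = \sum_{l\geq 0} (-1)^l \binom{d+1}{l} t^l$ by the binomial theorem. The coefficient $h_i$ of $t^i$ in the product is then the Cauchy convolution
\begin{equation*}
h_i \ = \ \sum_{l=0}^i (-1)^l \binom{d+1}{l} \lbrace a_1,\ldots,a_k\rbrace_{i-l}.
\end{equation*}
This is almost the claimed formula, but with $\binom{d+1}{l}$ in place of $\binom{d}{l}$; so the substantive step is to explain this discrepancy.

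The main obstacle, then, is the binomial coefficient $\binom{d}{l}$ rather than $\binom{d+1}{l}$. I expect the resolution to be that the relevant denominator degree for the $h$-polynomial of the Schubert variety is actually $d$ (so the denominator is $(1-t)^{d}$, not $(1-t)^{d+1}$), reflecting the degree of the Hilbert polynomial $\lbrace a_1,\ldots,a_k\rbrace_j$ in $j$ being $d-1$ rather than $d$; alternatively the convention for $d$ in the proposition differs by one from the ambient dimension. I would pin down the degree of $\lbrace a_1,\ldots,a_k\rbrace_j$ as a polynomial in $j$ — via the determinantal formula, each binomial $\binom{a_i+j-i}{j+l-i}$ contributes degree equal to its upper-minus-lower argument difference $a_i-l$, and summing along a transversal of the determinant gives total degree $\sum_i(a_i-i) = d$, but the leading terms cancel in the alternating sum so that the effective degree drops. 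Once the correct denominator exponent is established, the formula follows immediately from the convolution above, and the reduction to Sulanke's $k$-Narayana formula in the case $a_1=\cdots=a_k=n$ is then a direct specialization using Theorem~\ref{th:hilbgrass}.
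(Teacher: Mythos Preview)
Your convolution approach is natural, but it leaves a genuine gap that you do not close. You correctly derive
\[
h_i \ = \ \sum_{l=0}^i (-1)^l \binom{e}{l}\,\{a_1,\ldots,a_k\}_{i-l},
\]
where $e$ is the exponent of $(1-t)$ in the denominator of the Hilbert series, and you recognise that the whole question reduces to determining $e$. But your proposed resolution --- that the leading terms of the determinantal Hilbert polynomial cancel so that its degree in $j$ drops from $\sum_i(a_i-i)$ to one less --- is false. Each term in the Leibniz expansion of $\det\bigl(\binom{a_i+j-i}{a_i-l}\bigr)$ has degree $\sum_i(a_i-i)$ in $j$, and the leading coefficient of the determinant is $\det\bigl(1/(a_i-l)!\bigr)_{i,l}$, which is a nonzero Vandermonde-type quantity (it is, up to normalisation, the degree of the Schubert variety). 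So the Hilbert polynomial genuinely has degree $\sum_i(a_i-i)$, and your mechanism for shifting $\binom{d+1}{l}$ to $\binom{d}{l}$ does not work. What is actually happening is a convention: the symbol $d$ here, inherited from Nanduri's paper, is the Krull dimension of the homogeneous coordinate ring (equivalently, the denominator exponent), i.e.\ one more than the projective dimension of $X(a_1,\ldots,a_k)$. Without this identification your argument cannot terminate.

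The paper's proof avoids the issue entirely by taking a different, shorter route. It does not rederive the Hilbert-series relation from scratch; instead it quotes Nanduri's existing expression
\[
h_i \ = \ \sum_{l=0}^i (-1)^l\{a_1,\ldots,a_k\}_{i-l}\binom{d-1}{l}
\ - \ \sum_{l=0}^{i-1} (-1)^l\{a_1,\ldots,a_k\}_{i-l-1}\binom{d-1}{l},
\]
shifts the index $l\mapsto l-1$ in the second sum, and then collapses the two sums via Pascal's identity $\binom{d-1}{l}+\binom{d-1}{l-1}=\binom{d}{l}$. That is the entire argument: three lines of binomial manipulation standing on Nanduri's result, with no need to analyse the degree of the Hilbert polynomial or fix conventions independently.
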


\begin{proof}
According to the proof of Proposition 2.9 of~\cite{Nan}, the coefficient $h_i$ is given by
$$
h_i=\sum_{l=0}^i (-1)^l \lbrace a_1,\ldots,a_k \rbrace_{i-l} \binom{d-1}{l}-\sum_{l=0}^{i-1} (-1)^l \lbrace a_1,\ldots,a_k \rbrace_{i-l-1} \binom{d-1}{l}.
$$
We have
\begin{align*}
h_i
&=
\sum_{l=0}^i (-1)^l \lbrace a_1,\ldots,a_k \rbrace_{i-l} \binom{d-1}{l}-\sum_{l=1}^{i} (-1)^{l-1} \lbrace a_1,\ldots,a_k \rbrace_{i-l} \binom{d-1}{l-1} \\
&= 
\sum_{l=0}^i (-1)^l \lbrace a_1,\ldots,a_k \rbrace_{i-l} \binom{d-1}{l}+\sum_{l=0}^{i} (-1)^l \lbrace a_1,\ldots,a_k \rbrace_{i-l} \binom{d-1}{l-1} \\
&= 
\sum_{l=0}^i (-1)^l \lbrace a_1,\ldots,a_k \rbrace_{i-l} \binom{d}{l}.
\end{align*}
\end{proof}

\section{Generalized hypergeometric Euler transform}
For integers $p,q\geq 0$ and $a_1,\ldots,a_p,b_1,\ldots,b_q \in \CC$, where no $b_i$ is an integer smaller than one, recall the generalized hypergeometric function
$$
{}_{p}F_{q}\left(
\begin{matrix}
a_1,\ldots,a_p\\
b_1,\ldots,b_q
\end{matrix};t\right):= \sum\limits_{k=0}^{\infty} \frac{\left(a_1\right)_k \cdots \left(a_p\right)_k}{\left(b_1\right)_k \cdots \left(b_q\right)_k} \frac{t^k}{k!},
$$
where $\left(a\right)_k$ is the Pochhammer symbol standing for the rising factorial $a(a+1)\cdots(a+k-1)$ with $(a)_0=1$. Consider the Euler transformation for the ordinary hypergeometric function ${}_2F_1$:
$$
{}_{2}F_{1}\left(
\begin{matrix}
a,b\\
c
\end{matrix};t\right) = (1-t)^{c-a-b} {}_{2}F_{1}\left(
\begin{matrix}
c-a,c-b\\
c
\end{matrix};t\right).
$$
This identity in particular shows that the $2$-Narayana polynomial is a hypergeometric function, i.e. the identity
$$
\sum\limits_{j=0}^{k}(-1)^{k-j} \binom{2r+1}{k-j} \prod\limits_{i=0}^{1}\binom{r+i+j}{r}\binom{r+i}{r}^{-1}
=
\frac{1}{r}\binom{r}{k}\binom{r}{k+1}.
$$
The hypergeometric Euler transform has been generalized to ${}_pF_q$ in different ways, see~\cite{Ma}, Theorem 2.1 and~\cite{MP1}, Theorem 3 as well as~\cite{MP2}, Theorem 4. 
We will use Theorem 4 of~\cite{MP2} to express $k$-Narayana numbers as products for $k\geq 3$ as well. This leads to a representation of the $h$-polynomials of Grassmannians as hypergeometric functions for $k\geq 3$. 

In order to do this, we first state the generalized hypergeometric Euler transform. Denote by 
$\smallstir{x}{y}
$ the Stirling numbers of the second kind. Consider for $r \in \ZZ_{\geq 1}$ numbers $m_1,\ldots,m_r \in \ZZ_{\geq 1}$ and set $m=\sum m_r$. Furthermore let $a,b,c,f_1,\ldots,f_r \in \CC$ meet the requirements
$$
(c-a-m)_m, (c-b-m)_m, (1+a+b-c)_m \neq 0.
$$
Now for $j,k=0,\ldots,m$ denote by $\sigma_{j}$  the coefficient of $x^j$ in the polynomial $(f_1+x)_{m_1}\cdots (f_r+x)_{m_r}$ and set
\begin{align*}
A_k &:=\sum_{j=k}^m \stir{j}{k} \sigma_{j}, \\
G_k(t) &:={}_3F_2\left(
\begin{matrix}
k-m,k-t,1-c-t\\
1+b+k-c-t,1+a+k-c-t
\end{matrix};1\right), \\
Q(t) &:= \sum_{k=0}^m (-1)^kA_k(a)_k(b)_k(t)_k(c-a-m-t)_{m-k}(c-b-m-t)_{m-k}G_k(-t).
\end{align*}
With these definitions, we get the following:

\begin{theorem}[Generalized hypergeometric Euler transform,~\cite{MP2}, Th. 4]\label{th:geneul}
Let $\eta_1,\ldots,\eta_m$ be the nonvanishing zeros of the polynomial $Q(t)$. Then for $|t|<1$, the following equality holds true:
\begin{align*}
&{}_{r+2}F_{r+1}\left(
\begin{matrix}
a,b,f_1+m_1,\ldots,f_r+m_r\\
c,f_1,\ldots,f_r
\end{matrix};t\right)  \\
=& 
(1-t)^{c-a-b-m}
{}_{m+2}F_{m+1}\left(
\begin{matrix}
c-a-m,c-b-m,\eta_1+1,\ldots,\eta_m+1\\
c,\eta_1,\ldots,\eta_m
\end{matrix};t\right).
\end{align*}
\end{theorem}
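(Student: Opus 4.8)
The plan is to reduce both sides to the classical Euler transformation by treating the ``extra'' upper/lower parameter pairs as a polynomial in the Euler operator $\theta = t\,\frac{d}{dt}$. First I would observe that each pair $(f_i+m_i,f_i)$ contributes the factor $\frac{(f_i+m_i)_k}{(f_i)_k}=\frac{(f_i+k)_{m_i}}{(f_i)_{m_i}}$ to the $k$-th term of the series, and that $\prod_{i=1}^r (f_i+k)_{m_i}$ is a polynomial of degree $m$ in $k$. Since $\theta\, t^k = k\,t^k$, this gives the operator identity
\begin{equation*}
{}_{r+2}F_{r+1}\!\left(\begin{matrix} a,b,f_1+m_1,\ldots,f_r+m_r\\ c,f_1,\ldots,f_r\end{matrix};t\right)
=\frac{1}{\prod_{i}(f_i)_{m_i}}\Bigl[\prod_{i=1}^{r}(f_i+\theta)_{m_i}\Bigr]\,{}_{2}F_{1}\!\left(\begin{matrix} a,b\\ c\end{matrix};t\right).
\end{equation*}
Expanding the polynomial $(f_1+x)_{m_1}\cdots(f_r+x)_{m_r}=\sum_j \sigma_j x^j$ and then rewriting ordinary powers as falling factorials via $x^j=\sum_k \stir{j}{k}\,x(x-1)\cdots(x-k+1)$ turns this operator into $\sum_{k=0}^m A_k\,\theta(\theta-1)\cdots(\theta-k+1)$, with $A_k=\sum_{j=k}^{m}\stir{j}{k}\sigma_j$ exactly as defined before the theorem.

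Next I would use the classical Euler transform ${}_2F_1(a,b;c;t)=(1-t)^{c-a-b}\,g(t)$ with $g={}_2F_1(c-a,c-b;c;t)$, together with the identity $\theta(\theta-1)\cdots(\theta-k+1)=t^k\frac{d^k}{dt^k}$. Applying the Leibniz rule to $t^k\frac{d^k}{dt^k}\bigl[(1-t)^{c-a-b}g(t)\bigr]$, the $k$ derivatives distribute between the factor $(1-t)^{c-a-b}$ and $g$; the most singular contribution at $t=1$ comes from differentiating $(1-t)^{c-a-b}$ and produces $(1-t)^{c-a-b-m}$. Factoring $(1-t)^{c-a-b-m}$ out of the whole expression then matches the power of $(1-t)$ on the right-hand side of the theorem, leaving a convergent power series to be identified with the target ${}_{m+2}F_{m+1}$.

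The final step is to recognise that this remaining series has general term
\begin{equation*}
\frac{(c-a-m)_N\,(c-b-m)_N}{(c)_N}\,\Pi(N)\,\frac{t^N}{N!},
\end{equation*}
where $\Pi(N)$ is a polynomial of degree $m$ in $N$ assembled from the $A_k$, the Pochhammer symbols $(a)_k,(b)_k$, and the finite resummation of the Leibniz contributions. Comparing with the desired right-hand side, whose $N$-th term carries the factor $\prod_i\frac{(\eta_i+1)_N}{(\eta_i)_N}=\frac{\prod_i(N+\eta_i)}{\prod_i\eta_i}$, forces $\Pi(N)$ to be proportional to $\prod_i(N+\eta_i)$; that is, the parameters $\eta_i$ must be the roots of $\Pi$. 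Writing $\Pi$ as a polynomial in a continuous variable and collecting the finite inner Leibniz sum into a terminating ${}_3F_2$ at argument $1$ is precisely the content of the definitions of $G_k$ and $Q(t)$, so the $\eta_i$ emerge as exactly the nonvanishing zeros of $Q$.

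I expect the main obstacle to be this last identification: carrying out the Leibniz expansion of $t^k\frac{d^k}{dt^k}\bigl[(1-t)^{c-a-b}g(t)\bigr]$ explicitly, re-expanding $g$, and recognising the resulting finite inner sum (for each $k$) as the terminating ${}_3F_2$ at argument $1$ that appears as $G_k$ in $Q(t)$. The nonvanishing hypotheses $(c-a-m)_m,(c-b-m)_m,(1+a+b-c)_m\neq 0$ are what guarantee that $Q$ has degree exactly $m$ and hence $m$ well-defined zeros, so that the right-hand side ${}_{m+2}F_{m+1}$ is genuinely of the stated type.
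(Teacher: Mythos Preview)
The paper does not prove this theorem at all: it is quoted verbatim as Theorem~4 of~\cite{MP2} and then applied as a black box in the proof of Theorem~\ref{th:multform}. So there is no ``paper's own proof'' against which your proposal can be compared.

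That said, your outline is essentially the Miller--Paris strategy: rewrite the extra parameter pairs via $\prod_i(f_i+\theta)_{m_i}$ acting on ${}_2F_1(a,b;c;t)$, apply the classical Euler transform, and identify the leftover degree-$m$ polynomial in the summation index with $\prod_i(N+\eta_i)$, its roots being the $\eta_i$. One caution on your last paragraph: the nonvanishing conditions $(c-a-m)_m,(c-b-m)_m,(1+a+b-c)_m\neq 0$ do more than guarantee that $Q$ has the right degree. The first two ensure that the Pochhammer factors $(c-a-m)_N,(c-b-m)_N$ you pull out in the target series are nonzero for $0\le N\le m-1$ (otherwise the division needed to define $\Pi(N)$ at those $N$ would be illegitimate), and the third keeps the Leibniz derivatives of $(1-t)^{c-a-b}$ from collapsing prematurely. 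You should also be explicit that the identification of $\Pi(N)$ with $Q(-N)$ (up to normalisation) requires checking the leading coefficient, not just the roots, since the right-hand ${}_{m+2}F_{m+1}$ is normalised to have constant term $1$.
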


Now consider the case of the $k$-Narayana series
$$
\mathfrak{N}_{k,r}(t) = {}_{k}F_{k-1}\left(
\begin{matrix}
r,\ldots,r+k-1\\
2,\ldots,k
\end{matrix};t\right)
$$
with $k\geq 3$. 
Letting $c:=2, a:=r+k-2, b:=r+k-1$ and for $i=3,\ldots,k$ furthermore $f_i:=i, m_i:=r-3$, so that $m= \sum m_i = (k-2)(r-3)$, we have that none of
$$
(4-r-k-(k-2)(r-3))_{(k-2)(r-3)},
\quad (3-r-k-(k-2)(r-3))_{(k-2)(r-3)}
$$
$$
 (2(r+k)-4)_{(k-2)(r-3)} 
$$
equals zero. We thus get:

\begin{theorem}\label{th:multform}
The $k$-Narayana polynomial can be expressed  as the hypergeometric function
$$
N_{k,r} =
 {}_{m+2}F_{m+1}
\left(
\begin{matrix}
-(k+1)r-2k-3,-(k+1)r-2k-2,\eta_1+1,\ldots,\eta_{m}+1\\
2,\eta_1,\ldots,\eta_{m}
\end{matrix};t\right) 
$$
with $m=(k-2)(r-3)$. Moreover, for the $k$-Narayana numbers $N_k(r,j)$, we have the product formula
$$
N_k(r,j)= \binom{(k+1)r+2k+3}{j}\binom{(k+1)r+2k+2}{j} \prod_{i=1}^{(k-2)(r-3)}\frac{\eta_i+j}{\eta_i},
$$
where the $\eta_i$ are the zeros in $t$ of the polynomial
{\small
\begin{align*}
\sum_{l=0}^{(k-2)(r-3)}
& (-1)^l A_l  (-(k+1)r-2k-3-t)_{(k-2)(r-3)-l}(-(k+1)r-2k-t)_{(k-2)(r-3)-l} \\
&
(r+k-2)_l(r+k-1)_l(t)_l~{}_3F_2\left(
\begin{matrix}
l-(k-2)(r-3),l+t,t-1\\
r+k-2+l+t,r+k-3+l+t
\end{matrix};1\right).
\end{align*}}
\end{theorem}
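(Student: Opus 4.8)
The plan is to apply the generalized hypergeometric Euler transform of Theorem~\ref{th:geneul} directly to the hypergeometric expression for the $k$-Narayana \emph{series} and then to cancel the resulting power of $(1-t)$ against the denominator appearing in Corollary~\ref{cor:narid}. Recall from the proposition expressing the Narayana generating functions hypergeometrically that
$$
\mathfrak{N}_{k,r}(t) = {}_{k}F_{k-1}\left(\begin{matrix} r,\ldots,r+k-1\\ 2,\ldots,k\end{matrix};t\right),
$$
and from Corollary~\ref{cor:narid} that $\mathfrak{N}_{k,r}(t)=N_{k,r}(t)/(1-t)^{k(r-1)+1}$. Thus, once the right-hand side of Theorem~\ref{th:geneul} is shown to carry exactly the factor $(1-t)^{-(k(r-1)+1)}$, the remaining ${}_{m+2}F_{m+1}$ must be precisely $N_{k,r}(t)$, and the product formula will be read off from its Taylor coefficients.

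First I would justify the parameter identification recorded just before the statement: with $c:=2$, $a:=r+k-2$, $b:=r+k-1$ and $f_i:=i$, $m_i:=r-3$ for $i=3,\ldots,k$, the upper list $a,b,f_3+m_3,\ldots,f_k+m_k$ equals $\{r,\ldots,r+k-1\}$ and the lower list $c,f_3,\ldots,f_k$ equals $\{2,\ldots,k\}$, so that the left-hand side of Theorem~\ref{th:geneul} is indeed $\mathfrak{N}_{k,r}(t)$; here $m=\sum_{i=3}^{k}m_i=(k-2)(r-3)$. I would then verify the admissibility hypotheses $(c-a-m)_m$, $(c-b-m)_m$, $(1+a+b-c)_m\neq 0$, which is exactly the nonvanishing of the three Pochhammer symbols displayed before the theorem.

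Next I would apply Theorem~\ref{th:geneul} and compute the exponent $c-a-b-m$. A short calculation gives $c-a-b-m=-(k(r-1)+1)$, which matches the power of $(1-t)$ in Corollary~\ref{cor:narid} on the nose; cancelling it yields
$$
N_{k,r}(t)={}_{m+2}F_{m+1}\left(\begin{matrix} c-a-m,\,c-b-m,\,\eta_1+1,\ldots,\eta_m+1\\ 2,\,\eta_1,\ldots,\eta_m\end{matrix};t\right),
$$
where the $\eta_i$ are the nonvanishing zeros of $Q(t)$. This establishes the hypergeometric form of $N_{k,r}$. To obtain the product formula I would extract the coefficient of $t^j$, namely
$$
\frac{(c-a-m)_j\,(c-b-m)_j}{(2)_j\,j!}\prod_{i=1}^{m}\frac{(\eta_i+1)_j}{(\eta_i)_j},
$$
and then use the telescoping identity $(\eta_i+1)_j/(\eta_i)_j=(\eta_i+j)/\eta_i$ together with the conversion of the two negative-integer Pochhammer symbols $(c-a-m)_j,(c-b-m)_j$ into binomial coefficients via $(-N)_j=(-1)^j j!\binom{N}{j}$ to produce the stated product. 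Finally, the explicit polynomial whose roots are the $\eta_i$ is obtained by specializing $Q(t)$ to the present values of $a,b,c$, computing the coefficients $A_l$ from the Stirling-number transform of the coefficients $\sigma_j$ of $(3+x)_{r-3}\cdots(k+x)_{r-3}$, and inserting the specialized ${}_3F_2$ factor $G_l(-t)$.

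The main obstacle is the bookkeeping in the last two steps. Verifying that $c-a-b-m$ equals $-(k(r-1)+1)$ \emph{exactly} (rather than up to an off-by-one in $r$) is the pivotal algebraic check on which the whole cancellation rests, and converting $(c-a-m)_j(c-b-m)_j/\big((2)_j\,j!\big)$ into the displayed product of binomial coefficients requires care, since one must track both the signs $(-1)^{j}$ in each $(-N)_j$ and the factor arising from $(2)_j=(j+1)!$. The specialization of $Q(t)$ is purely mechanical substitution, but confirming that it has exactly $m=(k-2)(r-3)$ nonvanishing roots, as Theorem~\ref{th:geneul} demands, is the one point where I would pause to ensure that no spurious cancellation lowers the degree.
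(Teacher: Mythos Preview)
Your proposal is correct and follows essentially the same route as the paper: identify the parameters $c=2$, $a=r+k-2$, $b=r+k-1$, $f_i=i$, $m_i=r-3$, apply Theorem~\ref{th:geneul} to the hypergeometric form of $\mathfrak{N}_{k,r}$, match the exponent $c-a-b-m=-(k(r-1)+1)$ with the denominator in Corollary~\ref{cor:narid}, and then read off the product formula from the coefficients of the resulting ${}_{m+2}F_{m+1}$. The paper's proof is terser and does not explicitly discuss the degree of $Q$ or the sign-tracking you flag, but the argument is the same.
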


\begin{proof}
Letting $c=2, a=r+k-2, b=r+k-1$ and for $i=3,\ldots,k$ furthermore $f_i=i, m_i=r-3$ as stated above, the requirements of Theorem~\ref{th:geneul} are fulfilled and applying it we get
\begin{align*}
\mathfrak{N}_{k,r}(t) 
&=
{}_{k}F_{k-1}\left(
\begin{matrix}
r+k-2,r+k-1,r,\ldots,r+k-3\\
2,3,\ldots,k
\end{matrix};t\right) \\
&=\frac{ {}_{m+2}F_{m+1}
\left(
\begin{smallmatrix}
-(k+1)r-2k-3,-(k+1)r-2k-2,\eta_1+1,\ldots,\eta_{m}+1\\
2,\eta_1,\ldots,\eta_{m}
\end{smallmatrix};t\right)}{(1-t)^{(r-1)k+1}}
\end{align*}
with $m=(k-2)(r-3)$. So with Corollary~\ref{cor:narid}, we have
$$
N_{k,r} =
 {}_{m+2}F_{m+1}
\left(
\begin{matrix}
-(k+1)r-2k-3,-(k+1)r-2k-2,\eta_1+1,\ldots,\eta_{m}+1\\
2,\eta_1,\ldots,\eta_{m}
\end{matrix};t\right) 
$$
and thus
\begin{align*}
N_k(r,j)
&=
\frac{(-(k+1)r-2k-3)_j,(-(k+1)r-2k-2)_j (\eta_1+1)_j \cdots (\eta_m+1)_j}{(2)_j  (\eta_1)_j \cdots (\eta_m)_j j!} \\
&= \binom{(k+1)r+2k+3}{j}\binom{(k+1)r+2k+2}{j} \prod_{i=1}^{(k-2)(r-3)}\frac{\eta_i+j}{\eta_i}.
\end{align*}
\end{proof}

\begin{corollary}
The $h$-polynomial of the Grassmannian $\GR(k,n)$ is given by
{
$$
 {}_{m+2}F_{m+1}
\left(
\begin{matrix}
k(k-n)-n-3,k(k-n)-n-2,\eta_1+1,\ldots,\eta_{m}+1\\
2,\eta_1,\ldots,\eta_{m}
\end{matrix};t\right)
$$}
with $m=k(n-k)-2n+6$.
\end{corollary}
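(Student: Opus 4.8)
The plan is to read off this corollary as essentially a one-line specialization of the two preceding theorems, with no further analytic input required. By Theorem~\ref{th:hilbgrass} the $h$-polynomial of $\GR(k,n)$ is the $k$-Narayana polynomial $N_{k,n-k+1}$, and Theorem~\ref{th:multform} already expresses $N_{k,r}$ as the hypergeometric function ${}_{m+2}F_{m+1}$ with $m=(k-2)(r-3)$, numerator parameters $-(k+1)r-2k-3$ and $-(k+1)r-2k-2$, and the shifted data $\eta_i+1$ over $\eta_i$. Thus the whole corollary amounts to setting $r=n-k+1$ in that formula and rewriting the parameters in terms of $n$ and $k$.

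First I would carry out this substitution in the three scalar quantities. Expanding $(k+1)(n-k+1)=kn-k^2+n+1$ turns the two numerator parameters $-(k+1)r-2k-3$ and $-(k+1)r-2k-2$ into polynomials in $n$ and $k$, while $m=(k-2)(r-3)=(k-2)(n-k-2)$ expands to a quadratic in $n,k$; collecting terms puts these into the forms displayed in the statement, involving $k(k-n)-n$ and $k(n-k)-2n$. This step is purely arithmetic and carries no new mathematical content.

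The single point that genuinely deserves care is the status of the numbers $\eta_i$. In Theorem~\ref{th:multform} they are the nonvanishing zeros of the explicit polynomial built from $a=r+k-2$, $b=r+k-1$, $c=2$, $f_i=i$, $m_i=r-3$, via the coefficients $A_l$ and the inner ${}_3F_2$ factors of Theorem~\ref{th:geneul}. To keep the corollary self-contained I would substitute $r=n-k+1$ directly into that polynomial, obtaining a polynomial in $t$ with coefficients in $n,k$ whose degree is recorded by $m$, and take the $\eta_i$ of the corollary to be its zeros. Since every ingredient of the polynomial depends polynomially on $r$, this specialization is unambiguous, and no genuinely new quantities are introduced.

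The only obstacle is therefore bookkeeping rather than mathematics: one must keep the signs and the constant shifts (the $2k+3$ versus $2k+2$ in the two numerator parameters) straight through the substitution, and confirm that the degree of the specialized polynomial matches the stated value of $m$. Once these elementary checks are in place, the corollary follows immediately by chaining Theorem~\ref{th:hilbgrass} with Theorem~\ref{th:multform} evaluated at $r=n-k+1$.
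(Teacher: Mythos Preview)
Your approach is exactly the one the paper intends: the corollary is stated there without proof, as the immediate specialization $r=n-k+1$ of Theorem~\ref{th:multform}, combined with Theorem~\ref{th:hilbgrass}. There is nothing more to it than that, and you have identified this correctly.

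However, your claim that ``collecting terms puts these into the forms displayed in the statement'' does not survive the actual arithmetic. With $r=n-k+1$ one has
\[
m=(k-2)(r-3)=(k-2)(n-k-2)=k(n-k)-2n+4,
\]
not $k(n-k)-2n+6$, and the first numerator parameter becomes
\[
-(k+1)(n-k+1)-2k-3 = k(k-n)-n-2k-4,
\]
not $k(k-n)-n-3$; similarly the second parameter is $k(k-n)-n-2k-3$. (A quick sanity check: for $k=3$, $n=7$, so $r=5$, Theorem~\ref{th:multform} gives $m=2$ and first parameter $-29$, while the printed corollary would give $m=4$ and $-22$.) So the corollary as printed carries arithmetic slips, and the honest thing to do is to record the corrected constants rather than assert that the substitution reproduces the displayed ones.
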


\begin{remark}\label{rem:red}
Observe the case $k\geq r$. Here we have the reduction
\begin{align*}
\mathfrak{N}_{k,r}(t) 
&=
 {}_{k}F_{k-1}\left(
\begin{matrix}
r,\ldots,k,k+1,\ldots,r+k-1\\
2,\ldots,r-1,r,\ldots,k
\end{matrix};t\right) \\
 &=
 {}_{r-1}F_{r-2}\left(
\begin{matrix}
k+1,\ldots,(k+1)+(r-1)-1\\
2,\ldots,r-1
\end{matrix};t\right) 
 = \mathfrak{N}_{r-1,k+1}(t),
\end{align*}
from what follows $N_{k,r}(t)=N_{r-1,k+1}(t)$.
In this case, it makes sense to first use this reduction and afterwards apply the formula of Theorem~\ref{th:multform} to get a simpler formula.
\end{remark}

\begin{example}
We implemented our formula for the $k$-Narayana numbers from Theorem~\ref{th:multform} in Maple. The following table shows some results for $k=3$ and values of $r$ that cannot be reduced to simpler forms with Remark~\ref{rem:red}, i.e. for $r \geq 4$.

\renewcommand{\arraystretch}{1.8} 

\begin{longtable}{c|c}
 $r$ & $N_k(r,j)$ \\
\hline
 $4$ & $\binom{25}{j}\binom{24}{j} (1+4j)$ \\
 \hline
 $5$ & $\frac{1}{105}\binom{29}{j}\binom{28}{j} \left(-3-\sqrt{114}+j\right)\left(-3+\sqrt{114}+j\right)$ \\
 \hline
 $6$ & $\frac{1}{63}\binom{33}{j}\binom{32}{j} \left(-4-\sqrt{79}+j\right)\left(-4+\sqrt{79}+j\right)$ \\
 \hline
 $7$ &  $\frac{1}{495}\binom{37}{j}\binom{36}{j} \left(-5-2\sqrt{130}+j\right)\left(-5+2\sqrt{130}+j\right)\left(1+\frac{j}{4}\right)\left(1-\frac{j}{14}\right)$ \\
 \hline
 $8$ &  $\begin{smallmatrix}
 \frac{5}{61776}\binom{41}{j}\binom{40}{j} \left(-6-\frac{1}{10}\sqrt{17450 - 10 \sqrt{682705}}+j\right)\left(-6-\frac{1}{10}\sqrt{17450 + 10 \sqrt{682705}}+j\right) \\
 \cdot\left(-6+\frac{1}{10}\sqrt{17450 - 10 \sqrt{682705}}+j\right)\left(-6+\frac{1}{10}\sqrt{17450 + 10 \sqrt{682705}}+j\right)
\end{smallmatrix}$ 
\end{longtable}
\end{example}

\end{document}